\numberwithin{equation}{section}
\theoremstyle{plain}
\newtheorem{theorem}{Theorem}[section]
\newtheorem{corollary}[theorem]{Corollary}
\newtheorem{lemma}[theorem]{Lemma}
\theoremstyle{definition}
\newtheorem{example}[theorem]{Example}
\theoremstyle{remark}
\newtheorem*{remark}{Remark}
\newcommand{\Q}{\mathbb{Q}}
\newcommand{\Z}{\mathbb{Z}}
\newcommand{\C}{\mathbb{C}}
\renewcommand{\H}{\mathbb{H}}
\newcommand{\leg}[2]{\left( \frac{#1}{#2} \right)}
\newcommand{\ord}{\text {\rm ord}}
\begin{document}

\title[hook lengths and $3$-cores]
{hook lengths and $3$-cores}

\author{Guo-Niu Han and Ken Ono}

\address{I.R.M.A., UMR 7501, Universit\'e Louis Pasteur
et CNRS, 7 rue Ren\'e-Descartes, F-67084 Strasbourg, France}
\email{guoniu@math.u-strasbg.fr}

\address{Department of Mathematics, University of Wisconsin,
Madison, Wisconsin 53706} \email{ono@math.wisc.edu}
\thanks{The second author thanks
the support of the NSF, and he thanks the Manasse family.}

\begin{abstract} Recently, the first  author generalized a formula
of Nekrasov and Okounkov which gives a combinatorial formula, in
terms of hook lengths of partitions, for the coefficients of certain
power series. In the course of this investigation, he conjectured
that $a(n)=0$ if and only if $b(n)=0$, where integers $a(n)$ and
$b(n)$ are defined by
\begin{displaymath}
\begin{split}
\sum_{n=0}^{\infty}a(n)x^n&:=\prod_{n=1}^{\infty}(1-x^n)^8,\\
\sum_{n=0}^{\infty}b(n)x^n&:=\prod_{n=1}^{\infty}\frac{(1-x^{3n})^3}{1-x^n}.
\end{split}
\end{displaymath}
The numbers $a(n)$ are given in terms of hook lengths of partitions,
while $b(n)$ equals the number of $3$-core partitions of $n$. Here
we prove this conjecture.
\end{abstract}

\maketitle

\section{Introduction and statement of results}

In their work on random partitions and Seiberg-Witten theory,
Nekrasov and Okounkov \cite{NO} proved the following striking
formula:
\begin{equation}\label{NOformula}
F_z(x):=\sum_{\lambda} x^{|\lambda|}\prod_{h\in
\mathcal{H}(\lambda)}
\left(1-\frac{z}{h^2}\right)=\prod_{n=1}^{\infty}(1-x^n)^{z-1}.
\end{equation}
Here the sum is over integer partitions $\lambda$, $|\lambda|$
denotes the integer partitioned by $\lambda$, and
$\mathcal{H}(\lambda)$ denotes the multiset of classical hooklengths
associated to a partition $\lambda$. In a recent preprint, the first author
\cite{Han1} has obtained an extension of (\ref{NOformula}),
one which has a specialization which gives the classical generating
function
\begin{equation}\label{tcoregenfcn}
C_t(x):=\sum_{n=0}^{\infty}c_t(n)x^n=\prod_{n=1}^{\infty}\frac{(1-x^{tn})^t}{1-x^n}
\end{equation}
for the number of $t$-core partitions of $n$. Recall that a
partition is a {\it $t$-core} if none of its hook lengths are
multiples of $t$.

In the course of his work, the first author \cite{Han2} formulated a number of
conjectures concerning hook lengths of partitions. One of
these conjectures is related to classical identities of
Jacobi. For positive integers $t$, he compared the functions
$F_{t^2}(x)$ and $C_t(x)$. If $t=1$, we obviously have that
$$
F_1(x)=C_1(x)=1.
$$
For $t=2$, by two famous identities of Jacobi, we have
\begin{displaymath}
\begin{split}
F_4(x)&=\prod_{n=1}^{\infty}(1-x^n)^3=\sum_{k=0}^{\infty}(-1)^k(2k+1)x^{(k^2+k)/2},\\
C_2(x)&=\prod_{n=1}^{\infty}\frac{(1-x^{2n})^2}{1-x^n}=\sum_{k=0}^{\infty}x^{(k^2+k)/2}.
\end{split}
\end{displaymath}
In both pairs of power series one sees that the non-zero
coefficients are supported on the same terms.
For $t=3$, we then have
\begin{equation}
\begin{split}
F_9(x)&=\sum_{n=0}^{\infty} a(n)x^n:=\prod_{n=1}^{\infty}
(1-x^n)^8\\
&=1-8x+20x^2-70x^4+\cdots-520x^{14}+57x^{16}+560x^{17}+182x^{20}+\cdots
\end{split}\end{equation}
and
\begin{equation}
\begin{split}
C_3(x)&=\sum_{n=0}^{\infty} b(n)x^n:=\prod_{n=1}^{\infty}
\frac{(1-x^{3n})^3}{1-x^n}\\
&=1+x+2x^2+2x^4+\cdots+2x^{14}+3x^{16}+2x^{17}+2x^{20}+\cdots.
\end{split}
\end{equation}

\begin{remark} It is clear that $b(n)=c_3(n)$.
\end{remark}

In accordance with the elementary observations when $t=1$ and $2$,
one notices that the non-zero coefficients of $F_9(x)$ and $C_3(x)$ appear to be
supported on the same terms.
Based on substantial numerical evidence, the first author made the following
conjecture.

\smallskip
\noindent
{\bf Conjecture} 4.6. (Conjecture 4.6 of \cite{Han2}) \ \ \newline
{\it Assuming the notation above, we have that $a(n)=0$ if and only if
$b(n)=0$.}
\smallskip

\begin{remark}
The obvious generalization of Conjecture 4.6 and the examples above
is not true for $t=4$. In particular, one easily finds that
\begin{displaymath}
\begin{split}
F_{16}(x)&=1-15x+90x^2-\cdots+641445x^{52}+1537330x^{54}+\cdots,\\
C_4(x)&=1+x+2x^2+3x^3+\cdots+5x^{52}+8x^{53}+10x^{54}+\cdots.
\end{split}
\end{displaymath}
The coefficient of $x^{53}$ vanishes in $F_{16}(x)$ and is non-zero
in $C_4(x)$.
\end{remark}

Here we prove that Conjecture 4.6 is true. We have the following theorem.

\begin{theorem}\label{hanconjecture}
Assuming the notation above, we have that $a(n)=0$ if and
only if $b(n)=0$. Moreover, we have that $a(n)=b(n)=0$
precisely for those non-negative $n$ for which
$\ord_p(3n+1)$ is odd for some prime $p\equiv 2\pmod 3$.
\end{theorem}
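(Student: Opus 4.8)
The idea is to substitute $x\mapsto q$ and pass in both generating functions to the subsequence indexed by $3n+1$; each of the two resulting $q$-series is attached to the imaginary quadratic field $K=\Q(\sqrt{-3})$, and the vanishing of its coefficients is controlled by the splitting of primes in $\calO_K=\Z[\omega]$, where $\omega=e^{2\pi\imag/3}$. On the $b$-side, it is classical that the number of $3$-core partitions of $n$ is
\[
b(n)=c_3(n)=\sum_{d\mid 3n+1}\leg{d}{3},
\]
a consequence of the weight-one eta-quotient identity for $C_3(x)$ (the right-hand side counts the ideals of norm $3n+1$ in $\calO_K$, and norms coprime to $3$ are automatically $\equiv 1\pmod 3$). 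The multiplicative function $m\mapsto\sum_{d\mid m}\leg{d}{3}$ takes the value $a+1$ at $p^a$ when $p\equiv 1\pmod 3$, the value $1$ when $p=3$, and the value $0$ or $1$ according as $a$ is odd or even when $p\equiv 2\pmod 3$. Since $3n+1$ is coprime to $3$, we conclude: $b(n)=0$ if and only if $\ord_p(3n+1)$ is odd for some prime $p\equiv 2\pmod 3$.

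On the $a$-side, writing $q=e^{2\pi\imag z}$ we have $\eta(3z)^8=q\prod_{n\ge 1}(1-q^{3n})^8=\sum_{n\ge 0}a(n)q^{3n+1}$, a cusp form of weight $4$ on $\Gamma_0(9)$. Since $X_0(9)$ has genus $0$ with four cusps and $S_4(\Gamma_0(1))=S_4(\Gamma_0(3))=0$, one has $\dim S_4(\Gamma_0(9))=1$. The theta series $\Theta(z):=\sum_{\mathfrak a}\psi(\mathfrak a)\,q^{N\mathfrak a}$ attached to the Hecke character $\psi$ of $K$ of conductor $(\sqrt{-3})$ with $\psi((\alpha))=\alpha^3$ for any generator $\alpha\equiv 1\pmod{\sqrt{-3}}$ (the sum running over nonzero ideals $\mathfrak a\subseteq\calO_K$ prime to $3$) is also a cusp form of weight $4$ on $\Gamma_0(9)$ with trivial character, and its leading coefficient is $\psi(\calO_K)=1$. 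Comparing leading coefficients in the one-dimensional space $S_4(\Gamma_0(9))$ yields $\eta(3z)^8=\Theta(z)$, so that $a(n)=\sum_{N\mathfrak a=3n+1}\psi(\mathfrak a)$.

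Since $3n+1$ is prime to the level, $a(n)=\prod_{p\mid 3n+1}a\big(p^{\ord_p(3n+1)}\big)$, a product of local prime-power coefficients. If $p\equiv 2\pmod 3$ then $p$ is inert, $a(p)=0$, and the Hecke recursion $a(p^{k+1})=-p^{3}a(p^{k-1})$ gives $a(p^a)=0$ for $a$ odd and $a(p^a)=(-p^3)^{a/2}\neq 0$ for $a$ even. If $p\equiv 1\pmod 3$ then $(p)=\mathfrak p\bar{\mathfrak p}$ splits; with $\alpha:=\psi(\mathfrak p)$ and $\beta:=\psi(\bar{\mathfrak p})$ one has $\alpha\beta=p^3$ and $a(p^a)=(\alpha^{a+1}-\beta^{a+1})/(\alpha-\beta)$, which is nonzero for every $a\ge 0$: otherwise $\alpha/\beta=(\pi_0/\bar\pi_0)^3$ would be a root of unity (here $\pi_0$ is a generator of $\mathfrak p$ with $\pi_0\equiv 1\pmod{\sqrt{-3}}$), hence so would $\pi_0/\bar\pi_0$, forcing it into $\calO_K^\times$ and giving $\mathfrak p=\bar{\mathfrak p}$, contrary to $p$ being split. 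Therefore $a(n)=0$ if and only if $\ord_p(3n+1)$ is odd for some prime $p\equiv 2\pmod 3$ --- exactly the condition obtained for $b(n)$. This proves both $a(n)=0\iff b(n)=0$ and the explicit characterization of Theorem~\ref{hanconjecture}.

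\textbf{Main obstacle.} The $b$-side is routine once the classical $3$-core divisor formula is available. The substantive work lies on the $a$-side: identifying $\eta(3z)^8$ with the CM theta series spanning $S_4(\Gamma_0(9))$, and then ruling out \emph{accidental} vanishing of its coefficients at the split primes --- which reduces to the elementary fact that $\pi_0/\bar\pi_0$ is never a root of unity.
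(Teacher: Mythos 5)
Your proof is correct, and its overall architecture is the same as the paper's: identify $\eta(3z)^8$ with the CM newform spanning the one-dimensional space $S_4(\Gamma_0(9))$, reduce to prime powers by multiplicativity, and match against the divisor-sum formula $b(n)=\sum_{d\mid 3n+1}\leg{d}{3}$. The one place where you take a genuinely different route is the crux step, the non-vanishing of the coefficient at powers of a split prime $p\equiv 1\pmod 3$. You write the coefficient as $(\alpha^{a+1}-\beta^{a+1})/(\alpha-\beta)$ with $\alpha=\psi(\mathfrak p)$, $\beta=\psi(\bar{\mathfrak p})$, and rule out vanishing by showing $\alpha/\beta=(\pi_0/\bar\pi_0)^3$ is never a root of unity (a root of unity in $\Q(\sqrt{-3})$ is a unit of $\Z[\omega]$, which would force $\mathfrak p=\bar{\mathfrak p}$). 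The paper instead works modulo $p$: writing $p=x^2+3y^2$ with $x\equiv 1\pmod 3$, it shows $a^*(p)=2x^3-18xy^2\equiv 8x^3\not\equiv 0\pmod p$, and the Hecke recursion then gives $a^*(p^s)\equiv(8x^3)^s\pmod p$ by induction. Both arguments are standard and complete; yours is the more conceptual one (it is exactly the general mechanism behind non-vanishing of CM coefficients at split primes), while the paper's congruence is more elementary and yields the explicit formula $a^*(p)=2x^3-18xy^2$ as a by-product. Two cosmetic points: you should note that the series formula $a(p^a)=\sum_{i=0}^{a}\alpha^i\beta^{a-i}$ is what the Hecke recursion gives, with the closed form valid once $\alpha\neq\beta$ is known (your root-of-unity argument covers this case as $\alpha/\beta=1$); and the assertion that the theta series has trivial nebentypus deserves the one-line check that $\omega_c=\leg{\bullet}{3}=\chi_K$ on integers prime to $3$, so that $\chi_K\omega_c$ is trivial.
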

\begin{remark} As usual, $\ord_p(N)$ denotes the power of a prime $p$
dividing an integer $N$.
\end{remark}

\begin{remark}
Theorem~\ref{hanconjecture} shows that $a(n)=b(n)=0$ in a systematic
way. The vanishing coefficients are associated to primes $p\equiv
2\pmod 3$. If $n\equiv 1\pmod 3$ has the property that $\ord_p(n)$
is odd, then we have
\begin{displaymath}
a\left(\frac{n-1}{3}\right)=b\left(\frac{n-1}{3}\right)=0.
\end{displaymath}
For example, since $\ord_5(10)=1\equiv 1\pmod 2$, we have that
$a(3)=b(3)=0$.
\end{remark}

As an immediate corollary, we have the following.

\begin{corollary}\label{EZ}
For positive integers $N$, we have that
\begin{displaymath}
\sum_{\lambda \vdash N}
\prod_{h\in \mathcal{H}(\lambda))} \left(1-\frac{9}{h^2}\right)=0
\end{displaymath}
if and only if there are  no $3$-core partitions of $N$.
\end{corollary}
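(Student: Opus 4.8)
\emph{Strategy.} Both $a(n)$ and $b(n)$ are governed by the factorization of $3n+1$ in the Eisenstein integers $\Z[\omega]$, $\omega=e^{2\pi i/3}$ --- equivalently by the quadratic form $x^2+xy+y^2$ of discriminant $-3$ --- and the plan is to show that each of $a(n),b(n)$ vanishes exactly when $3n+1$ fails to be a norm from $K=\Q(\sqrt{-3})$. The ``$b$'' side is classical: the number of $3$-cores satisfies $\sum_{n\ge0}c_3(n)q^{3n+1}=\eta(9z)^3/\eta(3z)$, a weight-one form which is a multiple of the theta series of $x^2+xy+y^2$ (equivalently a weight-one Eisenstein series), so that
\begin{equation*}
b(n)=c_3(n)=\sum_{d\mid 3n+1}\leg{d}{3},
\end{equation*}
where $\leg{\cdot}{3}$ is the nontrivial Dirichlet character modulo $3$. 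Since $3n+1$ is prime to $3$ and this function is multiplicative in $3n+1$, one reads off that $b(n)=0$ precisely when some prime $p\equiv2\pmod3$ divides $3n+1$ to an odd power, and $b(n)>0$ otherwise; in particular $b(n)>0\iff 3n+1$ is represented by $x^2+xy+y^2$. This settles the ``$b$'' half of the \emph{moreover} clause.

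\emph{The generating function for $a(n)$.} Multiplying $\sum a(n)x^n=\prod_{n\ge1}(1-x^n)^8$ by $x$ and substituting $x\mapsto q^3$ gives
\begin{equation*}
\sum_{n\ge0}a(n)q^{3n+1}=q\prod_{n\ge1}(1-q^{3n})^8=\eta(3z)^8,\qquad q=e^{2\pi i z},
\end{equation*}
a cusp form of weight $4$ on $\Gamma_0(9)$. This eta-product is a Hecke eigenform with complex multiplication by $K=\Q(\sqrt{-3})$: its $p$-th coefficient vanishes for every prime $p\equiv2\pmod3$. Hence there is a Hecke Gr\"ossencharakter $\psi$ of $K$ of infinity type $z\mapsto z^3$ and conductor $(\sqrt{-3})$ with $\eta(3z)^8=\sum_{\mathfrak a}\psi(\mathfrak a)q^{N\mathfrak a}$, the sum over integral ideals of $\calO_K$; comparing coefficients,
\begin{equation*}
a(n)=\sum_{N\mathfrak a=3n+1}\psi(\mathfrak a).
\end{equation*}
(Equivalently, the Macdonald identity for the affine root system $\widetilde A_2$ writes $\eta(3z)^8$ out by hand as an explicit cubic theta series attached to $\Q(\sqrt{-3})$, so that this step --- and the argument below --- can be carried out entirely inside $\Z[\omega]$ without the structure theory of CM forms.) If $b(n)=0$ then $3n+1$ is not a norm, the index set above is empty, and $a(n)=0$; this already gives $b(n)=0\Rightarrow a(n)=0$.

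\emph{The converse, and the crux.} It remains to show $a(n)\ne0$ whenever $b(n)\ne0$, i.e.\ whenever $3n+1$ is a norm. Since $m\mapsto\sum_{N\mathfrak a=m}\psi(\mathfrak a)$ is multiplicative, it suffices to inspect the local factors. The ramified prime $3$ does not divide $3n+1$. For an inert prime $p\equiv2\pmod3$, only even exponents $2f$ occur, and the unique ideal $(p)^f$ of norm $p^{2f}$ contributes $\psi((p))^f=(-p^3)^f\ne0$. For a split prime $p=\mathfrak p\bar{\mathfrak p}$ with $p\equiv1\pmod3$, the factor attached to $p^e$ is
\begin{equation*}
\sum_{i=0}^{e}\psi(\mathfrak p)^i\psi(\bar{\mathfrak p})^{e-i}=\bar w^{\,e}\sum_{i=0}^{e}(w/\bar w)^i,\qquad w=\psi(\mathfrak p),\ |w|^2=p^3,
\end{equation*}
which vanishes only if $w/\bar w=w^2/p^3$ is a root of unity other than $1$; that would force $\mathfrak p^3=\bar{\mathfrak p}^3$, hence $\mathfrak p=\bar{\mathfrak p}$, contradicting that $p$ splits. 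So every local factor is nonzero, whence $a(n)\ne0$. Excluding this ``accidental'' cancellation among the Gr\"ossencharakter values --- the point at which the facts that $\Q(\sqrt{-3})$ has class number one and that the weight exceeds one are used --- is the only genuinely nontrivial step; the rest is bookkeeping. Combining the two sides gives $a(n)=0\iff b(n)=0\iff \ord_p(3n+1)$ is odd for some prime $p\equiv2\pmod3$, which is Theorem~\ref{hanconjecture}. Corollary~\ref{EZ} is then immediate: by \eqref{NOformula} with $z=9$ the sum over $\lambda\vdash N$ there equals $a(N)$, while $b(N)=c_3(N)$ counts the $3$-core partitions of $N$.
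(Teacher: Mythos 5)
Your deduction of the corollary itself is exactly the paper's: by (\ref{NOformula}) with $z=9$ the partition sum equals $a(N)$, and $b(N)=c_3(N)$ counts the $3$-cores of $N$, so everything rests on $a(N)=0\iff b(N)=0$, i.e.\ on Theorem~\ref{hanconjecture}. Your proof of that theorem shares the paper's overall architecture (identify $\mathcal{B}$ with the weight-one Eisenstein/theta series to get $b(n)=\sum_{d\mid 3n+1}\leg{d}{3}$, identify $\mathcal{A}=\eta(3z)^8$ with the CM newform in $S_4(\Gamma_0(9))$, then analyze local factors multiplicatively), but it handles the crux --- nonvanishing of $a^*(p^e)$ for split primes $p\equiv 1\pmod 3$ --- by a genuinely different argument. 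The paper computes $a^*(p)=2x^3-18xy^2\equiv 8x^3\not\equiv 0\pmod p$ (where $p=x^2+3y^2$, $x\equiv 1\pmod 3$) and pushes this through the Hecke recursion to conclude $a^*(p^s)\equiv(8x^3)^s\pmod p$; you instead observe that the local factor is a geometric sum in $t=\psi(\mathfrak{p})/\psi(\bar{\mathfrak{p}})$, which can vanish only if $t$ is a nontrivial root of unity, and you rule that out because it would force $\mathfrak{p}^3=\bar{\mathfrak{p}}^3$. Both are correct; yours is more structural (it isolates why weight $>1$ matters and transfers verbatim to other CM forms without evaluating the Hecke character), while the paper's congruence is more elementary and directly checkable. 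Two steps you should make explicit: the identification $\eta(3z)^8=\sum_{\mathfrak{a}}\psi(\mathfrak{a})q^{N(\mathfrak{a})}$ needs a justification (the paper's is that $\dim S_4(\Gamma_0(9))=1$, so the CM newform must coincide with $\eta(3z)^8$; your parenthetical appeal to the Macdonald identity for $\widetilde{A}_2$ would also serve), and in the root-of-unity step you should note that $t$ differs from $\pi^6/p^3\in K^{\times}$ only by a root of unity (writing $\mathfrak{p}=(\pi)$, which uses class number one), so if $t$ is a root of unity then $\pi^6/p^3$ is a root of unity lying in $K$, whence $(\pi)^6=(p)^3$ as ideals and $\mathfrak{p}^3=\bar{\mathfrak{p}}^3$ by unique factorization.
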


Theorem~\ref{hanconjecture} implies that ``almost all'' of the
$a(n)$ and $b(n)$ are 0. More precisely, we have the following.

\begin{corollary}\label{lacunarity}
Assuming the notation above, we have that
\begin{displaymath}
\lim_{X\rightarrow +\infty} \frac{\# \{ 0\leq n\leq X \ : \
a(n)=b(n)=0\}}{X}=1.
\end{displaymath}
\end{corollary}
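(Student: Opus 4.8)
The plan is to interpret both generating functions, after multiplying by a suitable power of $q$, as modular forms for $\Gamma_0(9)$ with complex multiplication by $\Q(\sqrt{-3})$. With $q=e^{2\pi i z}$, replacing $z$ by $3z$ in the two defining products gives
\[
\eta(3z)^8=\sum_{n\geq 0}a(n)\,q^{3n+1}
\qquad\text{and}\qquad
\frac{\eta(9z)^3}{\eta(3z)}=\sum_{n\geq 0}b(n)\,q^{3n+1},
\]
where $\eta$ is Dedekind's eta-function. First I would use Ligozat's criteria and the valence formula to check that $\eta(3z)^8\in S_4(\Gamma_0(9))$ and that $\dim S_4(\Gamma_0(9))=1$. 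Since $\Z[\omega]$, $\omega=e^{2\pi i/3}$, is a principal ideal domain, there is a Hecke Gr\"ossencharacter $\psi$ of infinity type $3$ and conductor $(\sqrt{-3})$, determined by $\psi((\alpha))=\alpha^3$ for $\alpha\equiv 1\pmod{\sqrt{-3}}$ (well defined because the units of $\Z[\omega]$ that are $\equiv 1\pmod{\sqrt{-3}}$ are exactly the cube roots of unity), and the attached CM cusp form $\sum_{\fraka}\psi(\fraka)\,q^{N(\fraka)}$ lies in $S_4(\Gamma_0(9))$; comparing leading coefficients identifies it with $\eta(3z)^8$, so that
\[
a(n)=\sum_{N(\fraka)=3n+1}\psi(\fraka),
\]
the sum over nonzero ideals $\fraka\subseteq\Z[\omega]$. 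For the weight-one form I would invoke the classical formula for the number of $3$-cores — equivalently, identify $\eta(9z)^3/\eta(3z)$ with the Eisenstein combination $E_1(\chi_{-3})(z)-E_1(\chi_{-3})(3z)$, $\chi_{-3}=\leg{\,\cdot\,}{3}$ — to get
\[
b(n)=c_3(n)=\sum_{d\mid 3n+1}\leg{d}{3}.
\]

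Next, viewing $a$ and $b$ as functions of $m=3n+1$, both are multiplicative, so (as $3\nmid m$) the problem reduces to the local factor at each prime $p\mid m$, where I set $e=\ord_p(m)$. If $p\equiv 2\pmod 3$ (inert in $\Q(\sqrt{-3})$): the local factor of $b$ is $1$ for $e$ even and $0$ for $e$ odd, while the local factor of $a$ is $\psi((p))^{e/2}=(-p^3)^{e/2}\ne 0$ for $e$ even and $0$ for $e$ odd. If $p\equiv 1\pmod 3$ (split, $p=\mathfrak p\overline{\mathfrak p}$): the local factor of $b$ is $e+1$, and the local factor of $a$ is
\[
\sum_{i=0}^{e}\psi(\mathfrak p)^i\psi(\overline{\mathfrak p})^{e-i}
=\sum_{i=0}^{e}\pi^{3i}\overline{\pi}^{\,3(e-i)}
=\frac{\pi^{3(e+1)}-\overline{\pi}^{\,3(e+1)}}{\pi^{3}-\overline{\pi}^{\,3}},
\]
where $\pi\equiv 1\pmod{\sqrt{-3}}$ generates $\mathfrak p$ and $N(\pi)=p$. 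Once I know this split local factor never vanishes, multiplying the local factors over $p\mid 3n+1$ shows $a(n)=0\iff b(n)=0\iff\ord_p(3n+1)$ is odd for some prime $p\equiv 2\pmod 3$ — exactly Theorem~\ref{hanconjecture}.

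The main obstacle — the one point that is not bookkeeping — will be precisely this non-vanishing of the split local factor, i.e.\ showing $\pi^{3(e+1)}\ne\overline{\pi}^{\,3(e+1)}$ for all $e\ge 0$, equivalently that $\pi^{3}/\overline{\pi}^{\,3}$ is not a root of unity. I expect to argue that $\mathfrak p\ne\overline{\mathfrak p}$ forces $\overline{\pi}\nmid\pi$ in $\Z[\omega]$ (otherwise $\pi/\overline{\pi}$ would be a unit and $\mathfrak p=\overline{\mathfrak p}$), so $\pi/\overline{\pi}$ is not an algebraic integer and hence not a root of unity, whence no power of it equals $1$; the ramified prime $3$ never intervenes because $3\nmid 3n+1$. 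Finally, Corollary~\ref{EZ} will follow by combining Theorem~\ref{hanconjecture} with the case $z=9$ of (\ref{NOformula}) and the identity $b(N)=c_3(N)$, and Corollary~\ref{lacunarity} will follow because the integers $m$ all of whose prime factors $\equiv 2\pmod 3$ occur to even exponents — i.e.\ the norms of ideals of $\Z[\omega]$ — have natural density $0$, so $\#\{0\le n\le X : a(n)=b(n)=0\}=X-o(X)$.
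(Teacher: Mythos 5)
Your argument for the corollary is correct, but it reaches the conclusion by a different citation than the paper. The paper deduces lacunarity by invoking Serre's theorem that almost all coefficients of a modular form with complex multiplication vanish, applied to $\mathcal{A}(z)=\eta(3z)^8$, and then transfers the statement to $b(n)$ via Theorem~\ref{hanconjecture}. You instead use the arithmetic characterization itself: once one knows that $a(n)=b(n)=0$ unless every prime $p\equiv 2\pmod 3$ divides $3n+1$ to an even power, the complementary set of $n$ is contained in the preimage of the set of norms of ideals of $\Z[\omega]$ (equivalently, integers represented by $x^2+xy+y^2$), and by Landau's classical theorem that set has counting function $O\bigl(Y/\sqrt{\log Y}\bigr)$, hence density zero. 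This is in effect an inlining of the proof of Serre's theorem in this special CM case; it is more self-contained and elementary (no appeal to \cite{Serrelacunarity}), at the cost of quoting Landau's density result, which you should cite or sketch rather than merely assert. The rest of your outline (the identification of $\eta(3z)^8$ with the CM form of infinity type $3$ and conductor $(\sqrt{-3})$, the local-factor computation, and the non-vanishing of the split local factor via the observation that $\pi/\overline{\pi}$ is not an algebraic integer and hence not a root of unity) correctly establishes the input Theorem~\ref{hanconjecture}, again by a route slightly different from the paper's, which proves non-vanishing at split primes by the congruence $a^*(p^s)\equiv (8x^3)^s\not\equiv 0\pmod p$.
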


\section*{Acknowledgements}
The authors thank Mihai Cipu for insightful comments 
related to Conjecture 4.6.

\section{Proofs}
It is convenient to renormalize the functions $a(n)$ and $b(n)$
using the series
\begin{equation}\label{Aq}
\begin{split}
\mathcal{A}(z)&=\sum_{n=1}^{\infty}a^*(n)q^n:=\sum_{n=0}^{\infty}a(n)q^{3n+1}\\
    &=q-8q^4+20q^7-70q^{13}+64q^{16}+56q^{19}-125q^{25}-160q^{28}+\cdots.
\end{split}
\end{equation}
and
\begin{equation}\label{Bq}
\begin{split}
\mathcal{B}(z)&=\sum_{n=1}^{\infty}b^*(n)q^n:=\sum_{n=0}^{\infty}b(n)q^{3n+1}\\
&=q+q^4+2q^7+2q^{13}+q^{16}+2q^{19}+q^{25}+2q^{28}+\cdots.
\end{split}
\end{equation}
Here we have that $z\in \H$, the upper-half of the complex plane,
and we let $q:=e^{2\pi i z}$. We make these changes since
$\mathcal{A}(z)$ and $\mathcal{B}(z)$ are examples of two special
types of modular forms (for background on modular forms, see
\cite{Bump, Iwaniec, Miyake, OnoCBMS}). The modularity of these two
series follows easily from the properties of Dedekind's eta-function
\begin{equation}
\eta(z):=q^{\frac{1}{24}}\prod_{n=1}^{\infty}(1-q^n).
\end{equation}
The proofs of Theorem~\ref{hanconjecture} and
Corollary~\ref{lacunarity}  shall rely on exact formulas we derive
for the numbers $a^*(n)$ and $b^*(n)$.

\subsection{Exact formulas for $a^*(n)$}

The modular form $\mathcal{A}(z)$ given by
\begin{displaymath}
\mathcal{A}(z)=\eta(3z)^8=\sum_{n=1}^{\infty}a^*(n)q^n
\end{displaymath}
is in $S_4(\Gamma_0(9))$, the space of weight 4 cusp forms on
$\Gamma_0(9)$. This space is one dimensional (see Section 1.2.3 in
\cite{OnoCBMS}). Therefore, every cusp form in the space is a
multiple of $\mathcal{A}(z)$. It turns out that $\mathcal{A}(z)$ is
a form with {\it complex multiplication}.

We now briefly recall the notion of a newform with complex
multiplication (for example, see Chapter 12 of \cite{Iwaniec} or
Section 1.2 of \cite{OnoCBMS}, \cite{Ribet}). Let $D<0$ be the
fundamental discriminant of an imaginary quadratic field
$K=\Q(\sqrt{D})$. Let $O_K$ be the ring of integers of $K$, and let
$\chi_K:=\leg{D}{\bullet}$ be the usual Kronecker character
associated to $K$. Let $k\geq 2$, and let $c$ be a Hecke character
of $K$ with exponent $k-1$ and conductor $\mathfrak{f}_c$, a
non-zero ideal of $O_K$. By definition, this means that
\begin{displaymath}
  c: I(\mathfrak{f}_c)\longrightarrow \C^{\times}
\end{displaymath}
is a homomorphism, where $I(\mathfrak{f}_c)$ denotes the group of
fractional ideals of $K$ prime to $\mathfrak{f}_c$. In particular,
this means that
\begin{displaymath}
  c(\alpha O_K)=\alpha^{k-1}
\end{displaymath}
for $\alpha\in K^{\times}$ for which $\alpha\equiv 1\ {\text {\rm
mod}}^{\times}
 \mathfrak{f}_c$.
To $c$ we naturally associate a Dirichlet character $\omega_c$
defined, for every integer $n$ coprime to $\mathfrak{f}_c$, by
\begin{displaymath}
   \omega_c(n):=\frac{c(nO_K)}{n^{k-1}}.
\end{displaymath}
Given this data, we let
\begin{equation}\label{CMform}
     \Phi_{K,c}(z):=\sum_{\mathfrak{a}} c(\mathfrak{a})q^{N(a)},
\end{equation}
where $\mathfrak{a}$ varies over the ideals of $O_K$ prime to
$\mathfrak{f}_c$, and where $N(\mathfrak{a})$ is the usual ideal
norm. It is known that $\Phi_{K,c}(z)\in S_k(|D|\cdot
N(\mathfrak{f}_c),\chi_K\cdot \omega_c)$ is a normalized newform.

Using this theory, we obtain the following theorem.

\begin{theorem}\label{astar}
Assume the notation above. Then the following are true:
\begin{enumerate}
\item If $p=3$ or $p\equiv 2\pmod 3$ is prime, then $a^*(p)=0$.
\item If $p\equiv 1\pmod 3$ is prime, then
$$
a^*(p)=2x^3-18xy^2,
$$
where $x$ and $y$ are integers for which $p=x^2+3y^2$ with $x\equiv
1\pmod 3$.
\end{enumerate}
\end{theorem}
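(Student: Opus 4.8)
The plan is to realize $\mathcal{A}(z)=\eta(3z)^8$ as the Hecke character form $\Phi_{K,c}(z)$ for $K=\Q(\sqrt{-3})$ and then read off the Fourier coefficients $a^*(p)$ directly from the definition \eqref{CMform}. First I would pin down the data: since $\mathcal{A}\in S_4(\Gamma_0(9))$ has trivial nebentypus, weight $k=4$, level $9=|D|\cdot N(\mathfrak{f}_c)$ with $D=-3$, we are forced to take $N(\mathfrak{f}_c)=3$, and since $\chi_K\cdot\omega_c$ must be trivial we need $\omega_c=\chi_K=\leg{-3}{\bullet}$. The ring $O_K=\Z[\zeta_3]$ has the ramified prime $3=-\zeta_3^{-1}(1-\zeta_3)^2$, so $\mathfrak{f}_c=(1-\zeta_3)$ or $(\sqrt{-3})$ is essentially forced; one then defines a Hecke character $c$ of exponent $3$ with this conductor, normalized so that $c(\mathfrak{p})=\pi^3$ where $\pi$ is the generator of $\mathfrak{p}$ congruent to $1$ modulo $\mathfrak{f}_c$ (the "primary" generator). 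Because $S_4(\Gamma_0(9))$ is one-dimensional and $\Phi_{K,c}$ lies in it with the same first coefficient, we get $\mathcal{A}=\Phi_{K,c}$; alternatively one can cite the classical fact that $\eta(3z)^8$ is the CM form attached to $\Q(\sqrt{-3})$.

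Granting $\mathcal{A}=\Phi_{K,c}=\sum_{\mathfrak{a}}c(\mathfrak{a})q^{N(\mathfrak{a})}$, the coefficient $a^*(p)$ is computed by splitting on the behavior of $p$ in $K=\Q(\sqrt{-3})$. If $p=3$, then $p$ is ramified but divides the conductor, so $\mathfrak{p}\nmid$ contributes nothing and $a^*(3)=0$. If $p\equiv 2\pmod 3$, then $p$ is inert, so there is no ideal of norm $p$ at all, whence $a^*(p)=0$; this gives statement (1). If $p\equiv 1\pmod 3$, then $p=\mathfrak{p}\bar{\mathfrak{p}}$ splits, and $a^*(p)=c(\mathfrak{p})+c(\bar{\mathfrak{p}})$. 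Writing $p=x^2+3y^2$ (possible exactly when $p\equiv 1\pmod 3$, with $x\equiv 1\pmod 3$ after adjusting signs), one has $4p=(2x)^2+3(2y)^2$ and the primary generator of $\mathfrak{p}$ can be taken to be $\pi=x+y\sqrt{-3}$ up to units $\{\pm1,\pm\zeta_3,\pm\zeta_3^2\}$; then $c(\mathfrak{p})+c(\bar{\mathfrak{p}})=\pi^3+\bar{\pi}^3$ after selecting the correct unit multiple. A direct expansion gives $\pi^3+\bar\pi^3=(x+y\sqrt{-3})^3+(x-y\sqrt{-3})^3=2x^3-18xy^2$, which is statement (2).

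The main obstacle is \emph{the normalization of the Hecke character}: one must check that the correct unit-multiple of $\pi$ is the one picked out by the congruence condition $\pi\equiv 1\ \mathrm{mod}^\times\mathfrak{f}_c$, so that $c(\mathfrak{p})=\pi^3$ with $\pi=x+y\sqrt{-3}$, $x\equiv 1\pmod 3$ (rather than, say, $\zeta_3\pi$ or $-\pi$, which would scale $\pi^3$ by a sixth root of unity and destroy the clean formula). Concretely I would verify that among the six associates of a generator of $\mathfrak{p}$ exactly one is $\equiv 1 \pmod{(1-\zeta_3)}$ in the relevant sense and that this one, written in the basis $\{1,\sqrt{-3}\}$, has first coordinate $\equiv 1\pmod 3$; equivalently, one fixes the ambiguity by matching against a single known coefficient of $\eta(3z)^8$, e.g. $a^*(7)=20$. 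Indeed $7=2^2+3\cdot 1^2$ with $x=2\equiv -1$, so taking $x=-2$, $y=\pm1$ gives $2(-2)^3-18(-2)(1)=-16+36=20$, confirming both the formula and the sign normalization. Everything else is routine: the splitting behavior of primes in $\Q(\sqrt{-3})$, the identity $p=x^2+3y^2\iff p\equiv1\pmod3$ (or $p=3$), and the binomial expansion of $\pi^3+\bar\pi^3$.
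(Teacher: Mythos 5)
Your proposal is correct and follows essentially the same route as the paper: identify $\mathcal{A}(z)=\eta(3z)^8$ with the CM newform $\Phi_{K,c}$ for $K=\Q(\sqrt{-3})$, $k=4$, $\mathfrak{f}_c=(\sqrt{-3})$, using the one-dimensionality of $S_4(\Gamma_0(9))$, and then read off $a^*(p)$ from the splitting of $p$ in $O_K$. The paper's proof is in fact far terser (it simply asserts that the coefficients of the CM form agree with the claimed formulas), so your explicit treatment of the unit/normalization ambiguity and the check against $a^*(7)=20$ supplies details the paper leaves implicit.
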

\begin{remark} It is a classical fact that every prime $p\equiv
1\pmod 3$ is of the form $x^2+3y^2$. Moreover, there is a unique
pair of positive integers $x$ and $y$ for which $x^2+3y^2=p$.
Therefore, the formula for $a^*(p)$ is well defined.
\end{remark}
\begin{proof} There is a form with complex multiplication in
$S_4(\Gamma_0(9))$. Following the recipe above, it is obtained by
letting $k=4$, $\Q(\sqrt{D})=\Q(\sqrt{-3})$ and $\mathfrak{f}_c:=
(\sqrt{-3})$. For primes $p$, the coefficients of $q^p$ in this form
agree with the claimed formulas. Since $S_4(\Gamma_0(9))$ is one
dimensional, this form must be $\mathcal{A}(z)$.
\end{proof}

Using this theorem, we obtain the following immediate corollary.

\begin{corollary}\label{aprimepower}
The following are true about $a^*(n)$.
\begin{enumerate}
\item If $m$ and $n$ are coprime positive integers, then
$$
a^*(mn)=a^*(m)a^*(n).
$$
\item For every positive integer $s$, we have that $a^*(3^s)=0$.
\item If $p\equiv 2\pmod 3$ is prime and $s$ is a positive integer,
then
$$
a^*(p^s)=\begin{cases} 0 \ \ \ \ \ &{\text {\rm if}}\ s\ {\text {\rm
is odd,}}\\
(-1)^{s/2}p^{3s/2} \ \ \ \ \ &{\text {\rm if}}\ s\ {\text {\rm is
even.}}
\end{cases}
$$
\item If $p\equiv 1\pmod 3$ is prime and $s$ is a positive integer,
then $a^*(p^s)\neq 0$. Moreover, we have that
$$
a^*(p^s)\equiv (8x^3)^s\pmod p,
$$
where $p=x^2+3y^2$ with $x\equiv 1\pmod 3$.
\end{enumerate}
\end{corollary}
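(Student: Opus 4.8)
The plan is to derive all four parts from the single input provided by Theorem~\ref{astar} and its proof: $\mathcal{A}(z)=\eta(3z)^8$ is a \emph{normalized} newform in $S_4(\Gamma_0(9))$ with trivial nebentypus, and the values $a^*(p)$ are as listed there. Being a normalized Hecke eigenform, $\mathcal{A}(z)$ has multiplicative Fourier coefficients, which is exactly part (1): $a^*(mn)=a^*(m)a^*(n)$ when $\gcd(m,n)=1$. For every prime $p\neq 3$ (the primes not dividing the level $9$) the coefficients obey the weight-$4$ Hecke recursion
\[
a^*(p^{s+1})=a^*(p)\,a^*(p^s)-p^{3}\,a^*(p^{s-1}),\qquad s\geq 1,
\]
with $a^*(1)=1$. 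Parts (2), (3), (4) then follow by substituting the three cases of Theorem~\ref{astar} into this recursion. Part (2) can in fact be seen at once without the recursion: $\eta(3z)^8=q\prod_{n\geq1}(1-q^{3n})^8$, so $a^*(m)=0$ whenever $m\not\equiv1\pmod 3$, and in particular $a^*(3^s)=0$ for every $s\geq1$.

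For part (3), suppose $p\equiv 2\pmod 3$. Then $a^*(p)=0$ by Theorem~\ref{astar}(1), so the recursion collapses to $a^*(p^{s+1})=-p^{3}\,a^*(p^{s-1})$. Starting from $a^*(1)=1$ and $a^*(p)=0$, an immediate induction on $s$ gives $a^*(p^s)=0$ for $s$ odd and $a^*(p^{2t})=(-p^{3})^{t}=(-1)^{t}p^{3t}$, i.e.\ $a^*(p^s)=(-1)^{s/2}p^{3s/2}$ for $s$ even, as claimed.

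For part (4), let $p\equiv 1\pmod 3$ and write $p=x^2+3y^2$ with $x\equiv 1\pmod 3$, so that $a^*(p)=2x^3-18xy^2$ by Theorem~\ref{astar}(2). First note that $p\nmid x$: otherwise $p\mid 3y^2$, and since $p\neq 3$ this forces $p\mid y$, whence $p^2\mid x^2+3y^2=p$, impossible. Substituting $3y^2=p-x^2$ gives $18xy^2=6x(p-x^2)=6px-6x^3$, and hence
\[
a^*(p)=2x^3-6px+6x^3\equiv 8x^3\pmod p .
\]
Since $p$ is odd and $p\nmid x$, we have $p\nmid 8x^3$, so $a^*(p)\not\equiv 0\pmod p$. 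Reducing the Hecke recursion modulo $p$ removes the term $p^{3}a^*(p^{s-1})$, leaving $a^*(p^{s+1})\equiv a^*(p)\,a^*(p^s)\pmod p$; induction then yields $a^*(p^s)\equiv a^*(p)^s\equiv (8x^3)^s\pmod p$. Because $(8x^3)^s\not\equiv 0\pmod p$, this forces $a^*(p^s)\not\equiv 0\pmod p$, and in particular $a^*(p^s)\neq 0$, so the congruence and the nonvanishing asserted in part (4) are obtained simultaneously.

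I do not anticipate a genuine obstacle here: everything reduces to the standard Hecke relations for a normalized newform. The only points requiring a little care are invoking the correct recursion at the bad prime $p=3$ versus the good primes $p\neq 3$, and confirming that $\mathcal{A}(z)$ — whose $q$-expansion begins $q-8q^4+\cdots$ — is indeed the normalized newform spanning the one-dimensional space $S_4(\Gamma_0(9))$; both facts are already supplied by Theorem~\ref{astar}.
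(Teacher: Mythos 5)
Your proposal is correct and follows essentially the same route as the paper: multiplicativity from the normalized Hecke eigenform property, the observation that $a^*(m)=0$ unless $m\equiv 1\pmod 3$ for part (2), and the weight-$4$ Hecke recursion at good primes combined with the reduction $a^*(p)\equiv 8x^3\pmod p$ for parts (3) and (4). The only cosmetic difference is that you justify $p\nmid x$ by a divisibility argument while the paper notes $0<|x|\leq\sqrt{p}$; both suffice.
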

\begin{proof}
Since $S_4(\Gamma_0(9))$ is one dimensional and since $a^*(1)=1$, it
follows that $\mathcal{A}(z)$ is a normalized Hecke eigenform. Claim
(1) is well known to hold for all normalized Hecke eigenforms.

Claim (2) follows by inspection since $a^*(n)=0$ if $n\equiv
0,2\pmod 3$.

To prove claims (3) and (4), we note that since $\mathcal{A}(z)$ is
a normalized Hecke eigenform on $\Gamma_0(9)$, it follows, for every
prime $p\neq 3$, that
\begin{equation}\label{powerrecursion}
a^*(p^s)=a^*(p)a^*(p^{s-1})-p^3a^{*}(p^{s-2}).
\end{equation}
If $p\equiv 2\pmod 3$ is prime, then Theorem~\ref{astar} implies
that
$$
a^*(p^s)=-p^3a(p^{s-2}).
$$
Claim (3) now follows by induction since $a^*(1)=1$ and $a^*(p)=0$.

Suppose that $p\equiv 1\pmod 3$ is prime. By Theorem~\ref{astar}, we
know that $a^*(p)\neq 0$. More importantly, we have that
\begin{displaymath}
a^*(p)\equiv 8x^3\pmod p,
\end{displaymath}
where $p=x^2+3y^2$ with $x\equiv 1\pmod 3$. To see this, one merely
observes that
$$
2x^3-18xy^2=2x(x^2-9y^2)=2x(x^2-3(p-x^2))\equiv 8x^3\pmod p.
$$
Since $|x|\leq \sqrt{p}$ and is non-zero, it follows that
$a^*(p)\equiv 8x^3\not \equiv 0\pmod p$. By (\ref{powerrecursion}),
we then have that
$$
a^*(p^s)\equiv a^*(p)a^*(p^{s-1})\equiv 8x^3a^*(p^{s-1})\pmod p.
$$
By induction, it follows that $a^*(p^s)\equiv (8x^3)^s\pmod p$,
which is non-zero modulo $p$. This proves claim (4).
\end{proof}

\begin{example} Here we give some numerical examples of the formulas
for $a^*(n)$.

\noindent 1) One easily finds that $a^*(13)=-70$. The prime $p=13$
is of the form $x^2+3y^2$ where $x=1$ and $y=2$. Obviously,
$x=1\equiv 1\pmod 3$, and so Theorem~\ref{astar} asserts that
$a^*(13)=2\cdot 1^3-18\cdot 1\cdot 2^2=-70$.

\smallskip
\noindent 2) We have that $a^*(13)=-70$ and $a^*(16)=64$. One easily
checks that $a^*(13\cdot 16)=a^*(208)=-70\cdot 64=-4408$. This is an
example of Corollary~\ref{aprimepower} (1).

\smallskip
\noindent 3) If $p=5$ and $s=3$, then Corollary~\ref{aprimepower}
(3) asserts that $a^*(5^3)=0$. If $p=5$ and $s=4$, then it asserts
that $a^*(5^4)=5^6=15625$. One easily checks both evaluations
numerically.

\smallskip
\noindent 4) Now we consider the prime $p=13\equiv 1\pmod 3$. Since
$x=1$ and $y=2$ for $p=13$, Corollary~\ref{aprimepower} (4) asserts
that $a^*(13^s)\equiv 8^s\pmod {13}$. One easily checks that
\begin{displaymath}
\begin{split}
a^*(13)&=-70\equiv 8\pmod{13},\\
a^*(13^2)&=2703\equiv 8^2\pmod{13},\\
a^*(13^3)&=-35420\equiv 8^3\pmod{13}.
\end{split}
\end{displaymath}

\end{example}

\subsection{Proof of Theorem~\ref{hanconjecture} and
Corollary~\ref{lacunarity}}

Before we prove Theorem~\ref{hanconjecture}, we recall a known
formula for $b(n)$ (also see Section 3 of \cite{GranvilleOno}), the
number of 3-core partitions of $n$.

\begin{lemma}\label{3core}
Assuming the notation above, we have that
$$
\mathcal{B}(z)=\sum_{n=1}^{\infty}b^*(n)q^n=
\sum_{n=0}^{\infty}b(n)q^{3n+1}=\sum_{n=0}^{\infty} \sum_{d\mid
3n+1} \leg{d}{3}q^{3n+1},
$$
where $\leg{\bullet}{3}$ denotes the usual Legendre symbol modulo 3.
\end{lemma}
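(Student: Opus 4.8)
The plan is to recognize $\calB(z) = \eta(3z)^3$ and then identify the coefficient of $q^{3n+1}$ in this product with a divisor sum. Concretely, $\eta(3z)^3 = q^{3/8}\prod_{n=1}^\infty(1-q^{3n})^3$, and since $\mathcal{B}(z) = \prod_{n=1}^\infty \frac{(1-q^{3n})^3}{1-q^n}\cdot q = q\prod(1-q^{3n})^3/\prod(1-q^n)$, I would first verify the eta-quotient identity $\mathcal{B}(z) = \eta(3z)^3$ directly: $\eta(3z)^3 = q^{3/8}\prod(1-q^{3n})^3$, whereas $\eta(z) = q^{1/24}\prod(1-q^n)$, so $\eta(3z)^3$ alone has the wrong leading power; the correct statement is $\mathcal{B}(z)=q\prod_{n\ge1}(1-q^{3n})^3/(1-q^n)$, and one checks this equals $\eta(3z)^9/\eta(z)\eta(9z)^3$ or, more simply, invokes the classical identity (going back to essentially Jacobi, and recorded in \cite{GranvilleOno}) that the generating function for $3$-cores is a weight-one theta series.

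The cleanest route is the following. First recall Jacobi's identity $\prod_{n\ge1}(1-q^{3n})^3 = \sum_{k\ge0}(-1)^k(2k+1)q^{3(k^2+k)/2}$. Then $\mathcal{B}(z) = q\prod(1-q^{3n})^3/\prod(1-q^n)$; expanding $1/\prod(1-q^n) = \sum p(m)q^m$ as the partition generating function does not immediately give the divisor form. Instead I would use the known fact — which I am entitled to cite from Section 3 of \cite{GranvilleOno} — that $b(n) = c_3(n)$ counts representations related to the quadratic form $x^2+xy+y^2$, and that the theta series attached to this form over $\Q(\sqrt{-3})$ has Fourier expansion with $(3n+1)$-th coefficient equal to $\sum_{d\mid 3n+1}\leg{d}{3}$. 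Equivalently, I would show $\mathcal{B}(z)$ is the unique normalized weight-one form on $\Gamma_0(9)$ with character $\leg{\bullet}{3}$ whose $L$-function factors as the Dedekind zeta function of $\Q(\sqrt{-3})$ shifted, namely $\mathcal{B}(z) = \sum_{n\ge1}\Big(\sum_{d\mid n}\leg{d}{3}\Big)q^n$, and observe that $\sum_{d\mid n}\leg{d}{3} = 0$ unless $n\equiv 1\pmod 3$ is achievable, forcing the support on $q^{3n+1}$.

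The key steps, in order, are: (i) establish the eta-quotient identity expressing $\mathcal{B}(z)$ as a theta series — the quickest proof multiplies out $\prod(1-q^{3n})^3$ via Jacobi and divides by $\prod(1-q^n)$ using Euler's pentagonal number theorem, matching against $\sum_{m\ge0} r(m) q^m$ where $r(m) = \#\{(a,b): a^2+ab+b^2 = m\}/6$ adjusted, or more efficiently just cite \cite{GranvilleOno}; (ii) compute the Fourier coefficients of this theta series as the ideal-counting function of $\Q(\sqrt{-3})$, which by unique factorization and the fact that $\Q(\sqrt{-3})$ has class number one gives precisely $\sum_{d\mid n}\leg{d}{3}$; (iii) check that this divisor sum vanishes identically on residues $n\equiv 0,2\pmod 3$, since for such $n$ the multiplicative function $\sum_{d\mid n}\leg{d}{3}$ picks up a factor $1 + \leg{p}{3} + \cdots$ at the prime $3$ (giving $1$) but an odd power of a prime $p\equiv2\pmod3$ contributes $1 + (-1) + \cdots + (-1)^{\text{odd}} = 0$ — wait, that is the vanishing criterion of Theorem~\ref{hanconjecture}, not the residue statement; the residue statement $n\not\equiv1\pmod3\Rightarrow$ coefficient $=0$ follows instead because an ideal of norm $n$ with $3\nmid n$ forces $n$ to be a norm from $\Q(\sqrt{-3})$, and norms are $\equiv 0,1\pmod 3$. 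The main obstacle is simply being careful with the eta-quotient bookkeeping in step (i): matching the $q^{1/24}$ and $q^{1/8}$ fractional powers so that the shift by $q^{3n+1}$ comes out exactly, and confirming the normalization constant is $1$ rather than some root of unity. Everything downstream is the standard dictionary between Hecke characters of $\Q(\sqrt{-3})$ and their attached theta series, which I would invoke exactly as in the treatment of $\mathcal{A}(z)$ above.
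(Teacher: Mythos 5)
Your overall strategy---identify $\mathcal{B}(z)$ with the weight-one theta series attached to the norm form $x^2+xy+y^2$ of $\Q(\sqrt{-3})$ and read off the coefficients as the ideal-counting function $\sum_{d\mid n}\leg{d}{3}$---is exactly the paper's (second) argument. But two of your concrete steps are wrong as stated. First, the eta-product bookkeeping: since $\mathcal{B}(z)=\sum b(n)q^{3n+1}$ is obtained from $\sum b(n)x^n=\prod(1-x^{3n})^3/(1-x^n)$ by the substitution $x=q^3$ (not $x=q$), the correct identity is
$$
\mathcal{B}(z)=q\prod_{n\ge 1}\frac{(1-q^{9n})^3}{1-q^{3n}}=\frac{\eta(9z)^3}{\eta(3z)},
$$
not $q\prod(1-q^{3n})^3/\prod(1-q^n)$, and not $\eta(3z)^9/\eta(z)\eta(9z)^3$ (whose leading exponent is $-1/24$, so it is not even a power series in $q$). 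This is not cosmetic: the level $9$ and the space $M_1(\Gamma_0(9),\leg{-3}{\bullet})$ in which the identification takes place come from this quotient.

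Second, and more seriously, the identity $\mathcal{B}(z)=\sum_{n\ge1}\bigl(\sum_{d\mid n}\leg{d}{3}\bigr)q^n$ is false, and your proposed rescue---that the divisor sum vanishes unless $n\equiv 1\pmod 3$---fails when $3\mid n$: for $n=3$ the divisors are $1,3$ and the sum is $\leg{1}{3}+\leg{3}{3}=1\neq 0$ (indeed the full theta series $\Theta(z)=1+6q+6q^3+6q^4+\cdots$ visibly has a $q^3$ term, corresponding to the ideal $(\sqrt{-3})$ of norm $3$). The divisor sum does vanish for $n\equiv 2\pmod 3$, but not on multiples of $3$. The missing step is to pass from $\Theta(z)$ to its restriction (twist) to the progression $n\equiv 1\pmod 3$, i.e.\ $\widetilde{\Theta}(z)=\sum_{n\equiv 1\,(3)}c(n)q^n$, and then to identify $\mathcal{B}(z)$ with $\tfrac16\widetilde{\Theta}(z)$ by a dimension count in the weight-one space on $\Gamma_0(9)$ together with matching the first coefficients. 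Without that twist your claimed $q$-expansion for $\mathcal{B}(z)$ has spurious nonzero coefficients at $n=3,9,12,\dots$, so the argument as written does not close. Your step (ii), computing $\sum_{d\mid n}\leg{d}{3}$ as the ideal count via class number one, is fine once the identification is repaired.
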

\begin{proof} We have that
$\mathcal{B}(z)=\eta(9z)^3/\eta(3z)$ is in $M_1(\Gamma_0(9),\chi)$,
where $\chi:=\leg{-3}{\bullet}$. The lemma follows easily from this
fact. One may implement the theory of weight 1 Eisenstein series to
obtain the desired formulas.

Alternatively, one may use the weight 1 form
$$
\Theta(z)=\sum_{n=0}^{\infty}c(n)q^n:=\sum_{x,y\in
\Z}q^{x^2+xy+y^2}=1+6q+6q^3+6q^4+12q^7+6q^9+\cdots.
$$
Using the theory of twists, we find that
\begin{displaymath}
\begin{split}
\widetilde{\Theta}(z)=\sum_{n\equiv 1\pmod 3}c(n)q^n&=
6q+6q^4+12q^7+12q^{13}+6q^{16}+12q^{19}+6q^{25}+\cdots\\
&=6\left(q+q^4+2q^7+2q^{13}+q^{16}+2q^{19}+q^{25}+\cdots\right).
\end{split}
\end{displaymath}
By dimensionality (see Section 1.2.3 of \cite{OnoCBMS}) we have that
$\mathcal{B}(z)=6\widetilde{\Theta}(z)$. The claimed formulas for
the coefficients follows easily from the fact that $x^2+xy+y^2$
corresponds to the norm form on the ring of integers of
$\Q(\sqrt{-3})$.

\end{proof}

\begin{example} The only divisors of primes $p\equiv 1\pmod 3$
are 1 and $p$, and so we have that
$b^*(p)=1+\leg{p}{3}=1+\leg{1}{3}=2$.
\end{example}

\begin{proof}[Proof of Theorem~\ref{hanconjecture}]
The theorem follows immediately from Theorems~\ref{astar},
\ref{aprimepower} and Lemma~\ref{3core}. One sees that the only
$n\equiv 1\pmod 3$ for which $a^*(n)=0$ are those $n$ for which
$\ord_p(n)$ is odd for some prime $p\equiv 2\pmod 3$. The same
conclusion holds for $b^*(n)$. Using the fact that
$$
a(n)=a^*(3n+1) \ \ \ {\text {\rm and}}\ \ \ b(n)=b^*(3n+1),
$$
the theorem follows.
\end{proof}

\begin{proof}[Proof of Corollary~\ref{lacunarity}]
In a famous paper \cite{Serrelacunarity}, Serre proved that ``almost
all'' of the coefficients of a modular form with complex
multiplication are zero. This implies that almost all of the
$a^*(n)$ are zero. The result now follows thanks to
Theorem~\ref{hanconjecture}.
\end{proof}

\end{document}